\def\qed{\hfill {\hbox{${\vcenter{\vbox{               
   \hrule height 0.4pt\hbox{\vrule width 0.4pt height 6pt
   \kern5pt\vrule width 0.4pt}\hrule height 0.4pt}}}$}}}
\def\tr{\triangleright}
\newtheorem{theorem}{Theorem}
\newtheorem{definition}{Definition}
\newtheorem{proposition}[theorem]{Proposition}
\newtheorem{example}{Example}
\newtheorem{remark}[example]{Remark}
\newenvironment{proof}[1][Proof]{\smallskip\noindent{\bf #1.}\quad}%
{\qed\par\medskip}
\date{}
\title{\Large \textbf{The column group and its link invariants}}
\author{Johanna Hennig \and Sam Nelson} 
\begin{document}


\maketitle

\begin{abstract}
The \textit{column group} is a subgroup of the symmetric group on 
the elements of a finite blackboard birack generated by the column
permutations in the birack matrix. We use subgroups of the column 
group associated to birack homomorphisms to define an enhancement
of the integral birack counting invariant and give examples which 
show that the enhanced invariant is stronger than the unenhanced 
invariant.
\end{abstract}

\parbox{6in}{\textsc{Keywords:} blackboard biracks, link invariants, 
enhancements of counting invariants

\smallskip

\textsc{2000 MSC:} 57M27, 57M25 }

\section{\large \textbf{Introduction}}

Introduced in \cite{FRS}, a \textit{birack} is a solution to the 
set-theoretic Yang-Baxter equation satisfying certain invertibility
conditions. A \textit{blackboard birack} is a type of strong birack 
with axioms corresponding to oriented blackboard-framed Reidemeister moves 
\cite{bbr}. Special cases include quandles \cite{J,M}, racks \cite{FR}, 
strong biquandles \cite{FJK} and semiquandles \cite{HN}.

In \cite{bbr} an invariant of knots and links is defined from any
finite blackboard birack $X$ by counting labelings of a knot or link
diagram with elements of $X$ over a set of framings of the knot or 
link determined by a quantity known as the \textit{birack rank} of $X$.

In this paper we describe an enhancement of the birack counting invariant
defined using a group determined by the columns of the birack operation
matrices known as the \textit{column group}. The paper is organized as
follows. In section \ref{B} we review the basics of blackboard biracks.
In section \ref{CG} we define the column group and make a few useful 
observations. In section \ref{E} we use the column group to enhance 
the blackboard birack counting invariants and give some examples and
computations. In section \ref{Q} we collect questions for future 
research.

\section{\large \textbf{Blackboard biracks}}\label{B}

In this section we review the basics of blackboard biracks. See \cite{bbr}
for more.

A \textit{blackboard framed link} is an equivalence class of link
diagrams under the \textit{blackboard framed Reidemeister moves}:
\[\scalebox{0.93}{$\begin{array}{ccc}
\includegraphics{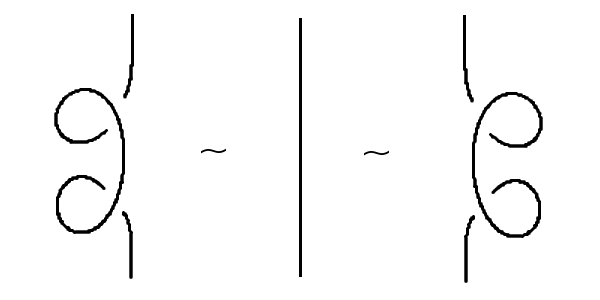} &
\includegraphics{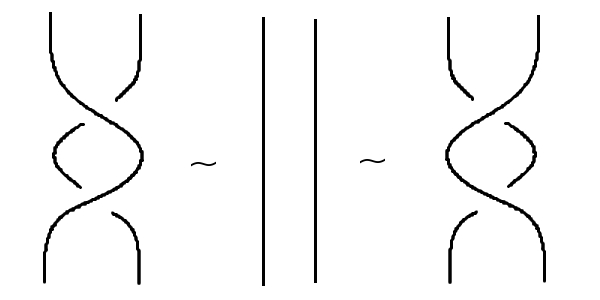} &
\includegraphics{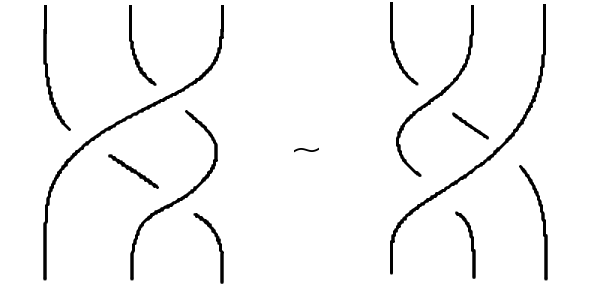} \\
\mathrm{Type\ I} &
\mathrm{Type\ II} &
\mathrm{Type\ III} \\
\end{array}$}
\]
Including a choice of orientation for each component gives us 
\textit{blackboard framed oriented links}. Each component $C_k$ has a
\textit{writhe} $w_k$ equal to the sum of the crossing signs over the 
set of crossings where both strands belong to $C_k$:
\[\includegraphics{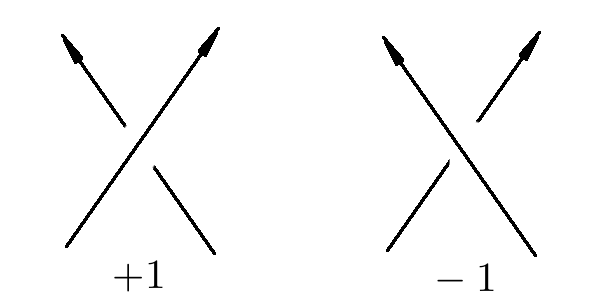}\]
The writhe of each component is an invariant of oriented blackboard framed 
isotopy. Assuming a fixed choice of ordering of the components of a link 
$L=C_1\cup\dots\cup C_c$, we obtain a \textit{writhe vector} 
$\mathbf{w}=(w_1,\dots,w_c)\in \mathbb{Z}^c$. Setting these writhe vectors
equal to to each other yields \textit{ambient isotopy} of unframed knots 
and links; this operation is equivalent to replacing the blackboard framed
type I moves with the usual single-kink versions.

\begin{definition}\textup{
Let $X$ be a set. A map $B=(B_1(x,y),B_2(x,y)):X\times X\to X\times X$ is 
\textit{strongly invertible} if it satisfies the following conditions:
\begin{list}{}{}
\item[(i)]{$B$ is \textit{invertible}, i.e. there exists a map 
$B^{-1}:X\times X \to X\times X$ satisfying \[BB^{-1}=\mathrm{Id}=B^{-1}B,\]}
\item[(ii)]{$B$ is \textit{sideways invertible}, i.e. there exists a unique
invertible map $S:X\times X \to X\times X$ satisfying for all $x,y\in X$
\[S(B_1(x,y),x)=(B_2(x,y),y),\]
and}
\item[(iii)]{$B$ is \textit{diagonally invertible}, i.e. the restrictions of 
the components of $S$ to the diagonal $\Delta=\{(x,x)\ |\ x\in X\}$
are bijections.}
\end{list}
}\end{definition}

\begin{definition}\textup{
Let $X$ be a set and $B:X\times X\to X\times X$ a 
strongly invertible map. The bijection $\pi:X\to X$ defined by
\[\pi(x)=S_2|_{\Delta}^{-1}\circ S_1|_{\Delta}\]
is called the \textit{kink map} of the pair $(X,B)$. The exponent of $\pi$,
i.e. the minimal integer $N\ge 1$ satisfying $\pi^N=\mathrm{Id}$ 
(or $\infty$ if no such $N$ exists), is called the \textit{birack rank} of 
\textit{birack characteristic} of $(X,B)$.
}\end{definition}

\begin{definition} \textup{
A \textit{blackboard birack} is a set $X$ with a strongly invertible map
$B:X\times X\to X\times X$ which satisfies the \textit{set-theoretic 
Yang-Baxter equation}:
\[(B\times \mathrm{Id})(\mathrm{Id}\times B)(B\times \mathrm{Id})
=(\mathrm{Id}\times B)(B\times \mathrm{Id})(\mathrm{Id}\times B)\]
where $\mathrm{Id}:X\to X$ is the identity map.}
\end{definition}

\begin{example}
\textup{Let $X$ be a module over the ring
$\tilde\Lambda=\mathbb{Z}[t^{\pm 1},s,r^{\pm 1}]/(s^2-(1-tr)s)$. Then it is 
easy to check (see \cite{bbr}) that 
\[B(x,y)=(ty+sx,rx)\]
defines a blackboard birack structure with $\pi(x)=(tr+s)$. We call this
a \textit{$(t,s,r)$-birack}.}
\end{example}

\begin{example}
\textup{Let $X$ be a set. Two bijections $\tau,\sigma:X\to X$ define a
blackboard birack structure on $X$ by 
\[B(x,y)=(\tau(y),\sigma(x))\]
if and only if $\tau\sigma=\sigma\tau$. We call this a \textit{constant 
action birack}; such a birack has kink map $\pi=\sigma\tau^{-1}$.}
\end{example}

\begin{example}
\textup{Many previously studied algebraic structures in knot theory are 
special cases of blackboard biracks:
\begin{itemize}
\item{A blackboard birack in which $\pi(x)=\mathrm{Id}$ is a 
\textit{strong biquandle} \cite{FJK}}
\item{A blackboard birack in which $B_2(x,y)=x$ for all $x,y\in X$ is a
\textit{rack} \cite{FR}}
\item{A blackboard birack in which $\pi(x)=\mathrm{Id}$ and $B_2(x,y)=x$ 
for all 
$x,y\in X$ is a \textit{quandle} \cite{J,M}.}
\end{itemize}
}\end{example}

Given a finite set $X=\{x_1,\dots, x_n\}$ we can define a blackboard
birack structure on $X$ by specifying the operation tables of the components
of $B$ with a matrix $M=[M_{B_1}|M_{B_2}]$ with two $n\times n$ blocks 
$M_{B_1}$ and $M_{B_2}$ whose $(i,j)$ entries respectively are $k$ and $l$ 
where $B_1(x_j,x_i)=x_k$ and $B_2(x_i,x_j)=x_l$.\footnote{Notice the reversed 
order of the inputs in $B_1$; this is for compatibility with notation in 
previous work.} Given such matrices, we check cases to determine
whether the blackboard birack axioms are satisfied (or rather, have a 
computer do so for us). It is not hard to see, for example, that such
matrices must have columns which are permutations.

\begin{example}
\textup{Let $X=\{1,2,3,4\}$ with $\tau=(12)$ and $\sigma=(34)$. Then 
$\tau\sigma=\sigma\tau$ so we have a constant action birack with nontrivial
kink map $\pi=(12)(34)$. The birack matrix is given by}
\[M=\left[\begin{array}{cccc|cccc}
2 & 2 & 2 & 2 & 1 & 1 & 1 & 1 \\
1 & 1 & 1 & 1 & 2 & 2 & 2 & 2 \\
3 & 3 & 3 & 3 & 4 & 4 & 4 & 4 \\
4 & 4 & 4 & 4 & 3 & 3 & 3 & 3 \\
\end{array}\right].\]
\end{example}

\begin{definition}
\textup{Let $(X,B)$ and $(X',B')$ be blackboard biracks.
As with other algebraic structures, we have the following useful notions:
\begin{itemize}
\item{A map $f:X\to X'$ satisfying $B(f(x),f(y))=(f(B_1(x,y)),f(B_2(x,y)))$ is
a \textit{homomorphism} of biracks,}
\item{A subset $Y\subset X$ such that the restriction $B_Y=B|_{Y\times Y}$ 
defines a blackboard birack is a \textit{subbirack} of $X$.}
\end{itemize}}
\end{definition}

A blackboard birack can be used to label the semiarcs in an oriented
blackboard-framed link diagram as indicated. Strong invertibility guarantees
that such labelings are preserved under blackboard framed Reidemeister moves
I and II in the sense that every labeling satisfying the pictured labeling 
condition at every crossing before a move corresponds to a unique such 
labeling after the move, while the Yang-Baxter condition does the 
same for type III moves. See \cite{bbr} for more.

\[\includegraphics{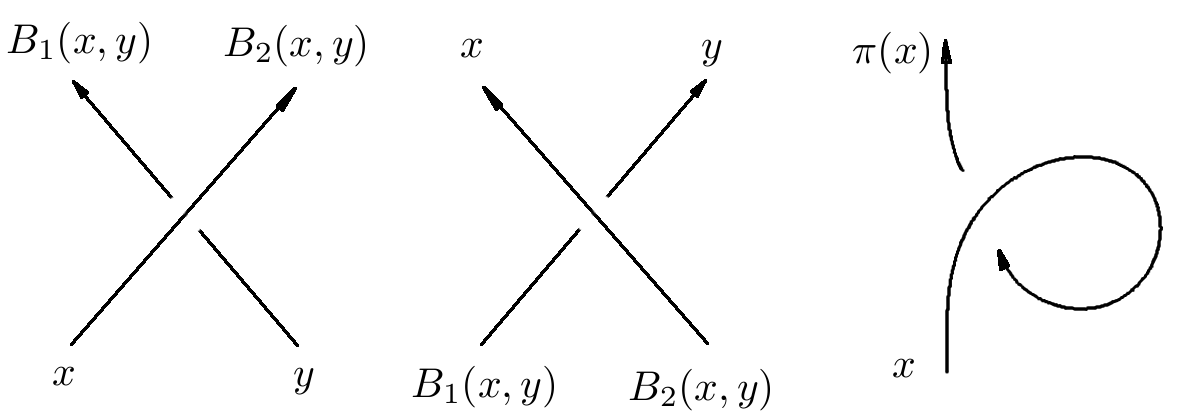}\]

If $(X,B)$ is a blackboard birack, then the set of labelings of a blackboard
framed oriented link diagram $L$ such that the crossing conditions pictured 
above are satisfied at every crossing is an invariant of blackboard-framed 
isotopy known as the \textit{basic counting invariant}, denoted 
$\mathrm{Hom}(BBR(L),(X,B))$. If $(X,B)$ has finite birack rank $N$, then
two blackboard-framed isotopic diagrams with congruent writhe vectors 
modulo $N$ are related by the blackboard-framed oriented Reidemeister
moves together with the \textit{$N$-phone cord move:}

\[\includegraphics{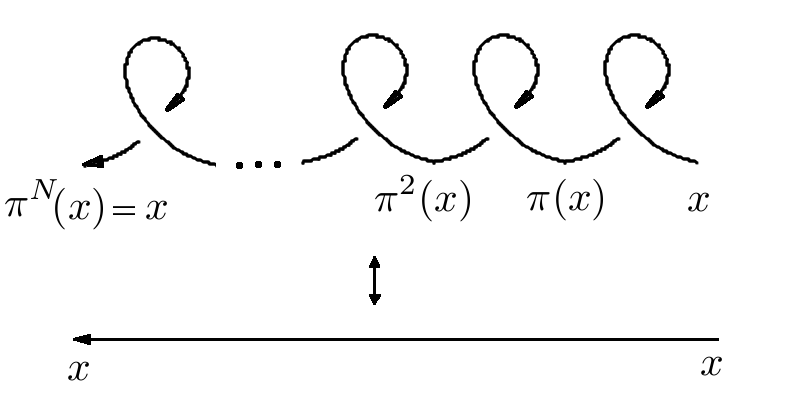}\]

Two such diagrams then have the same basic counting invariant with respect 
to $(X,B)$. Thus, the quantity
\[\Phi_{(X,B)}^{\mathbb{Z}}(K)=\sum_{\mathbf{w}\in(\mathbb{Z}_N)^c} 
|\mathrm{Hom}(BBR(K,\mathbf{w}),{X,B})|\]
is an invariant of ambient isotopy of knots and links, called the
\textit{integral blackboard birack counting invariant}.

An \textit{enhancement} of $\Phi^{\mathbb{Z}}_{(X,B)}(K)$ is a generally
stronger invariant which associates to each labeling a signature which 
is invariant under birack-labeled Reidemeister moves. One standard example
is the \textit{image enhanced blackboard birack counting invariant}
\[\Phi_{(X,B)}^{\mathrm{Im}}(K)=\sum_{\mathbf{w}\in(\mathbb{Z}_N)^c} 
\left(\sum_{f\in \mathrm{Hom}(BBR(K,\mathbf{w}),{X,B})} t^{|\mathrm{Im}(f)|}\right)\]
where $\mathrm{Im}(f)$ is the image of the labeling $f$ regarded as
a homomorphism from the fundamental blackboard birack of $K$ to $(X,B)$,
i.e. the smallest subbirack of $(X,B)$ containing all of the labels appearing 
in $f$. Another standard example is the \textit{writhe enhanced blackboard 
birack counting invariant} given by
\[\Phi_{(X,B)}^{W}(K)=\sum_{\mathbf{w}\in(\mathbb{Z}_N)^c} 
|\mathrm{Hom}(BBR(K,\mathbf{w}),{X,B})|q^{\mathbf{w}}\]
where $q^{(w_1,\dots,w_c)}=\prod_{k=1}^c q^{w_k}$. This enhancement keeps track of 
which writhe vectors contribute which colorings, and for certain racks 
determines the linking number mod $N$ for links with two components \cite{N3}.

Other examples of enhancements are known in special cases, such as 
quandle/biquandle/rack 2-cocycle enhancements \cite{CES, CJKLS, N3},
quandle/rack/biquandle polynomials \cite{N,N2, CN}, and various enhancements 
which use extra structure of the labeling objects, e.g. symplectic quandle
enhancements \cite{NN} and Coxeter rack enhancements \cite{NW}.

\section{\large \textbf{The column group}}\label{CG}

We can now define the column group of a blackboard birack.

Let $(X,B)$ be a blackboard birack with $n$ elements specified by a birack 
matrix $M$. As we have noted, the columns of $M$ determine permutations in 
the symmetric group $S_n$ -- for each $x_j\in X$, define 
$\tau_j,\sigma_j:\{1,2,\dots, n\}\to \{1,2,\dots, n\}$ by
$\tau_j(i)=k$ and $\sigma_j(i)=l$ where $x_k=B_1(x_j,x_i)$ and
$x_l=B_2(x_i,x_j)$. That is, $\tau_j$ and $\sigma_j$ are the permutations
determined by the $j$th column of $M_{B_1}$ and $M_{B_2}$ respectively.
We will call $\tau_j$ and $\sigma_j$ the \textit{upper} and \textit{lower 
column permutations} of the element $x_j$ respectively. Note that if $(X,B)$ 
is a rack then $\sigma_j=\mathrm{Id}$ for all $j$, and if $(X,B)$ is a 
quandle, then $\tau_j$ has $j$ as a fixed point for each $j$.

\begin{definition}
\textup{The \textit{column group} $CG(X)$ of a finite blackboard birack $X$ 
with $n$ elements is the subgroup of $S_n$ generated by the elements 
$\tau_j,\sigma_j\in S_n$ corresponding to the columns of the birack matrix 
$M_{(X,B)}$. More generally, if $S\subset X$ is a subbirack then the 
\textit{column subgroup}
$CG(S\subset X)$ is the subgroup of $CG(X)$ generated by the permutations 
corresponding to the columns of the elements of $S$.}
\end{definition}

\begin{example}
\textup{Consider the $(t,s,r)$-birack $X=\mathbb{Z}_3$ with $t=1$, $r=2$ and
$s=2$. $X$ has birack matrix below with the listed
upper and lower column permutations.}
\[\begin{array}{|c|ccc|ccc|} \hline
M_{(X,B)} & \tau_1 & \tau_2 & \tau_3 & \sigma_1 & \sigma_2 & \sigma_3 \\ \hline
& & & & & & \\
\left[\begin{array}{ccc|ccc}
1 & 3 & 2 & 1 & 1 & 1 \\
2 & 1 & 3 & 3 & 3 & 3 \\
3 & 2 & 1 & 2 & 2 & 2 \\
\end{array}\right] & () & (132) & (123) & (23) & (23) & (23) \\ 
& & & & & &  \\ \hline
\end{array}
\]
\textup{Thus, $CG(X)$ is the dihedral group of six elements; the subbirack
$S=\{1\}$ has column subgroup $CG(S\subset X)\cong \mathbb{Z}_2$.}
\end{example}

\begin{remark}\textup{If $(X,B)$ is quandle or rack, then the operation
$\tr$ defined by $x\tr y=B_1(y,x)$ is self-distributive. In 
this case, the column group $CG(X)$ is a subgroup of the automorphism group 
$\mathrm{Aut}(X)$ of $(X,B)$, sometimes called the \textit{inner automorphism 
group} of $X$. The column group is also related to the \textit{operator 
group} defined in \cite{FR}. In the more general setting of blackboard 
biracks, however, the columns need not be automorphisms, so for simplicity 
we prefer the term ``column group.''}
\end{remark}

\begin{proposition} 
Let $(X,B)$ and $(X',B')$ be finite blackboard biracks. If there 
exists an isomorphism of biracks $\phi:X\to X'$, then $CG(X)$ is isomorphic to 
$CG(X')$.
\end{proposition}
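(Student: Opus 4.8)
The plan is to show that an isomorphism of biracks $\phi:X\to X'$ induces a group isomorphism $CG(X)\to CG(X')$ by conjugation. Label the elements so that $X=\{x_1,\dots,x_n\}$ and $X'=\{x_1',\dots,x_n'\}$; since $\phi$ is a bijection it determines a permutation $p\in S_n$ via $\phi(x_i)=x_{p(i)}'$. My claim is that $p$ conjugates the column permutations of $X$ to those of $X'$, and that the resulting map $g\mapsto pgp^{-1}$ restricts to an isomorphism $CG(X)\to CG(X')$.

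First I would verify the key conjugation identity on generators. Write $\tau_j,\sigma_j$ for the column permutations of $X$ and $\tau_j',\sigma_j'$ for those of $X'$. The defining relation $B_1(x_j,x_i)=x_{\tau_j(i)}$ together with the homomorphism property $B'(\phi(x_j),\phi(x_i))=(\phi(B_1(x_j,x_i)),\dots)$ should, after unwinding the index bookkeeping, yield $\tau_{p(j)}'=p\,\tau_j\,p^{-1}$, and similarly $\sigma_{p(j)}'=p\,\sigma_j\,p^{-1}$. The main thing to track carefully here is the reversed input order in the definition of $\tau_j$ noted in the paper's footnote, and the fact that $\phi$ permutes which column index we are looking at (column $j$ of $X$ corresponds to column $p(j)$ of $X'$), so the generating sets are matched up by $p$ rather than index-by-index. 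This computation, while routine, is the main obstacle in the sense that it is where an off-by-a-permutation error could creep in.

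Once the conjugation identity holds on generators, the rest is formal. Conjugation by $p$ is an automorphism of the whole symmetric group $S_n$, and it sends the generating set $\{\tau_j,\sigma_j\}$ of $CG(X)$ onto the generating set $\{\tau_{p(j)}',\sigma_{p(j)}'\}=\{\tau_k',\sigma_k'\}$ of $CG(X')$ (the reindexing by $p$ being a bijection of index sets, the image set is exactly all the generators of $CG(X')$). Since a group homomorphism is determined by its action on generators and an automorphism of $S_n$ carries the subgroup generated by a set to the subgroup generated by the image, the map $g\mapsto pgp^{-1}$ restricts to a surjective homomorphism $CG(X)\to CG(X')$; it is injective because conjugation is injective on all of $S_n$. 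Hence $CG(X)\cong CG(X')$, completing the argument.

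One point I would be slightly careful about is that the statement presupposes $X$ and $X'$ have the same number of elements, which is automatic since an isomorphism is in particular a bijection; this is what lets both column groups sit inside the same $S_n$ and makes conjugation by $p$ well defined. I would state the conjugation lemma explicitly as the crux and relegate the verification of $\tau_{p(j)}'=p\tau_j p^{-1}$ to a direct check from the definitions, as the remaining steps are standard facts about generated subgroups under conjugation.
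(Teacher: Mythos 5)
Your proposal is correct, and it takes a cleaner route than the paper, though both rest on the same core computation. The identity you flag as the crux does hold exactly as you claim: from $B'_1(\phi(x_j),\phi(x_i))=\phi(B_1(x_j,x_i))$ and $\phi(x_i)=x'_{p(i)}$ one gets $\tau'_{p(j)}(p(i))=p(\tau_j(i))$, i.e.\ $\tau'_{p(j)}=p\,\tau_j\,p^{-1}$, and likewise $\sigma'_{p(j)}=p\,\sigma_j\,p^{-1}$ (the reversed-input convention affects both $X$ and $X'$ identically, so it cancels out of the bookkeeping). The paper proves precisely this same identity --- written as $\phi(\tau_j(i))=\tau_{\phi(j)}(\phi(i))$ --- but then uses it quite differently: rather than conjugating, it shows that every relation $\tau_{i_1}^{\delta_1}\sigma_{i_1}^{\epsilon_1}\circ\dots\circ\tau_{i_k}^{\delta_k}\sigma_{i_k}^{\epsilon_k}=\mathrm{Id}$ satisfied in $CG(X)$ translates under $\phi$ into the corresponding relabeled relation in $CG(X')$, and conversely via $\phi^{-1}$, concluding that the two groups have presentations differing only by relabeling of generators. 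Your version packages all of that into a single ambient automorphism of $S_n$ (conjugation by $p$), which carries one generating set bijectively onto the other and hence restricts to an explicit isomorphism $CG(X)\to CG(X')$. This buys real simplification: you need no discussion of relations or presentations, and no use of the paper's finiteness remark that $\tau_i^{-1}=\tau_i^{l}$, since conjugation automatically respects inverses and arbitrary words. The paper's argument is essentially your argument unpacked relation by relation; the only thing it might be said to buy is that it never asks the reader to view $\phi$ itself as an element of $S_n$, but since $|X|=|X'|$ forces both column groups into the same symmetric group anyway (as you correctly note), that is no real cost.
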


\begin{proof}
We will show that $CG(X)$ and $CG(X')$ have presentations which differ
only by relabeling. 

Let us denote $B_1(x,y)=y\tr_1 x$, $B_2(x,y)=x\tr_2 y$. Then $\phi$ a birack 
isomorphism says 
\[\phi(x\tr_1 y)=\phi(x)\tr_1 \phi(y)\quad \mathrm{and}\quad 
\phi(x\tr_2 y)=\phi(x)\tr_2 \phi(y).\]

Then if $X=\{x_1,\dots, x_n\}$ we have $X'=\{\phi(x_1),\dots, \phi(x_n)\}$. 
Let us abbreviate $\phi(x_i)$ as $\phi(i)$. Then the column groups $CG(X)$ 
and $CG(X')$ are generated by 
\[\{\tau_1,\dots,\tau_n,\sigma_1,\dots, \sigma_n\}\quad
\mathrm{and}
\quad \{\tau_{\phi(1)},\dots,\tau_{\phi(n)},\sigma_{\phi(1)},\dots,
\sigma_{\phi(n)}\}\] respectively. Note also that the finiteness of $X$
implies for any $\tau_i$ and $\sigma_i$ we have $\tau_i^{-1}=\tau_i^l$ and
$\sigma_i^{-1}=\sigma_i^{m}$ for some $l,m>0$.

Then
\[\phi(x_i \tr_1 x_j) = \phi(x_i)\tr_1 \phi(x_j) \quad \iff \quad 
\phi(\tau_j(i))=\tau_{\phi(j)}\left(\phi(i)\right)\] and
\[\phi(x_i\tr_2 x_j) = \phi(x_i)\tr_2(\phi(x_j) \quad \iff \quad 
\phi(\sigma_j(i))=\sigma_{\phi(j)}\left(\phi(i)\right)\] 

Let $\delta_i,\epsilon_i\in\{0,1\}$ and  for all
$i,j\in\{1,\dots,n\}$ set $x_i\tr_1^0x_j=x_i\tr_2^0x_j=x_i$. 
It follows that for any relation 
$\tau_{i_1}^{\delta_1}\sigma_{i_1}^{\epsilon_1}\circ \dots\circ \tau_{i_k}^{\delta_k}\sigma_{i_k}^{\epsilon_k}=\mathrm{Id}$ 
satisfied in $CG(X)$, we have for all $x_j\in X$
\begin{eqnarray*}
\phi(x_j) & = & \phi(\tau_{i_1}^{\delta_1}\sigma_{i_1}^{\epsilon_1} \circ 
\dots\circ \tau_{i_k}^{\delta_k}\sigma_{i_k}^{\epsilon_k}(x_j)) \\
& = & \phi(((\dots (x_j \tr_2^{\epsilon_k} x_{i_k})\tr_1^{\delta_k} x_{i_k})\dots\tr_2^{\epsilon_1} x_{i_1})\tr_1^{\delta_1} x_{i_1}) \\
& = & (\dots ((\phi(x_j) \tr_2^{\epsilon_k} \phi(x_{i_k}))\tr_1^{\delta_k} \phi(x_{i_k}))\dots \tr_2^{\epsilon_1} \phi(x_{i_1}))\tr_1^{\delta_1} \phi(x_{i_1}) \\
& = & \tau_{\phi(i_1)}^{\delta_1}\sigma_{\phi(i_1)}^{\epsilon_1} \circ \dots\circ 
\tau_{\phi(i_k)}^{\delta_k}\sigma_{\phi(i_k)}^{\epsilon_k}(\phi(x_j)) \\
\end{eqnarray*}
and the relation 
$\tau_{\phi(i_1)}^{\delta_1}\sigma_{\phi(i_1)}^{\epsilon_1}\circ \dots\circ
\tau_{\phi(i_k)}^{\delta_k}\sigma_{\phi(i_k)}^{\epsilon_k}=\mathrm{Id}$
is satisfied in $CG(X')$. 

Replacing $\phi$ with $\phi^{-1}$ shows that every 
relation satisfied in $CG(X')$ arises in this way. Thus, $CG(X)$ and $CG(X')$ 
have presentations which differ only by relabeling, and $CG(X)\cong CG(X')$.
\end{proof}

Note that, like quandle, biquandle and rack polynomials, the column 
subgroup of a subbirack carries information about how the subbirack is 
embedded in the overall birack. In particular, the column subgroup 
$CG(S\subset X)$ is not in general isomorphic to the column group $CG(S)$ 
considered as a stand-alone birack; isomorphic subbiracks $S\subset X$ and 
$T\subset X$ embedded differently in $X$ generally have non-isomorphic column 
groups $CG(S\subset X)\not\cong CG(T\subset X)$, as the next example 
illustrates.

\begin{example}
\textup{The two 2-element subbiracks $\{1,2\}$ and $\{3,4\}$ of the
birack with birack matrix}
\[M_{(X,B)}=\left[\begin{array}{cccc|cccc}
1 & 1 & 2 & 2 & 1 & 1 & 1 & 1 \\
2 & 2 & 1 & 1 & 2 & 2 & 2 & 2 \\
3 & 3 & 3 & 3 & 3 & 3 & 3 & 3 \\
4 & 4 & 4 & 4 & 4 & 4 & 4 & 4 \\
\end{array}\right]\]
\textup{are both isomorphic to the trivial quandle of two elements, but
$CG(\{1,2\}\subset X)={1}$ while $CG(\{3,4\}\subset X)\cong\mathbb{Z}_2$.} 
\end{example}

\section{\Large \textbf{Enhancing the counting invariant}}\label{E}

We will now use the column group to define an enhancement of the blackboard 
birack counting invariants. 

\begin{definition}
\textup{Let $L=L_1\cup\dots\cup L_c$ be an oriented link of $c$ components
and $(X,B)$ a finite blackboard birack with birack rank $N$. The 
\textit{column group enhanced birack multiset invariant} is the multiset of 
column subgroups}
\[\phi_{(X,B)}^{CG,M}(L)=\left\{CG(\mathrm{Im}(f)\subset X)\ | \  
f\in \mathrm{Hom}(BBR(L,\mathbf{w}),(X,B)), \mathbf{w}\in(\mathbb{Z}_N)^c
\right\}\]
\textup{and the \textit{column group enhanced birack polynomial invariant} is}
\[\phi_{(X,B)}^{CG}(L)=\sum_{\mathbf{w}\in(\mathbb{Z}_N)^c)} 
\left(\sum_{f\in \mathrm{Hom}(BBR(L,\mathbf{w}),(X,B))} u^{|CG(\mathrm{Im}(f)\subset X)|}\right).\]
\end{definition}

That is, $\phi^{CG,M}_{(X,B)}(L)$ is the multiset of column subgroups of
the image subbiracks of labelings of a diagrams of $L$ by $(X,B)$ over a 
complete period of framings of $L$ modulo $N$. In the polynomial
version $\phi^{CG}_{(X,B)}(L)$ we trade some information (isomorphism
type of a column subgroup is replaced with its cardinality) to get a more
easily comparable invariant. In both cases, including column subgroup 
information enables the new invariants to distinguish between 
different labelings, resulting in a more sensitive invariant than simply 
counting labelings. Note that we can recover the integral birack counting 
invariant $\Phi^{\mathbb{Z}}_{(X,B)}(L)$ by specializing $u=1$ in
$\phi^{CG}_{(X,B)}(L)$ or by taking the cardinality of 
$\phi^{CG,M}_{(X,B)}(L)$.

\begin{example}
\textup{The trefoil knot $3_1$ has nine colorings by the $(t,s,r)$-birack
$X=\mathbb{Z}_3=\{1,2,3\}$ with $t=2, \ s=1, \ r=1$ -- in fact, these are 
the well-known Fox 3-colorings of the trefoil. Three of these labelings 
have singleton image subbiracks and six are surjective. Each element of $X$ 
has $\tau_i$ a transposition, so the column subgroups of the constant 
labelings are copies of $\mathbb{Z}_2$, while the surjective labelings 
have column subgroup generated by all three transpositions, i.e. isomorphic 
to $S_3$. Thus, the integral birack counting invariant value 
$\Phi_{(X,B)}^{\mathbb{Z}}(3_1)=|\mathrm{Hom}(BBR(3_1),(X,B))|=9$ with
column group enhancements becomes
$\phi^{CG,M}_{(X,B)}(L)=\{3\times \mathbb{Z}_2,6\times S_3\}$ or
$\phi^{CG}_{(X,B)}(L,T)(3_1)=3u^2+6u^6.$}
\begin{center}
{\begin{tabular}{|c|c|c|}
\hline
\includegraphics[width=1in]{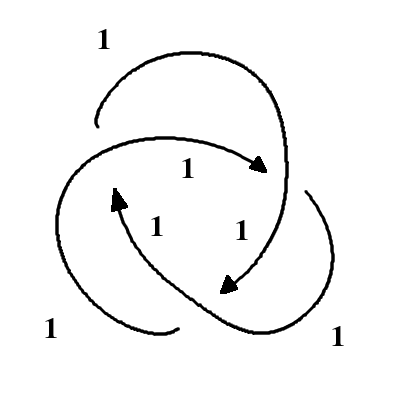}&
\includegraphics[width=1in]{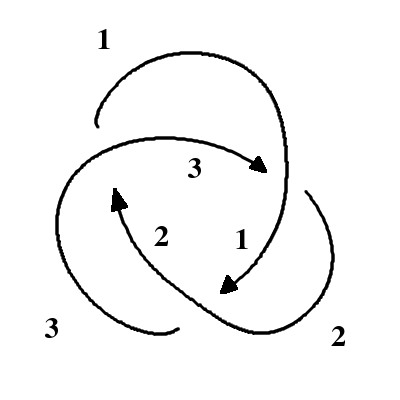}&
\includegraphics[width=1in]{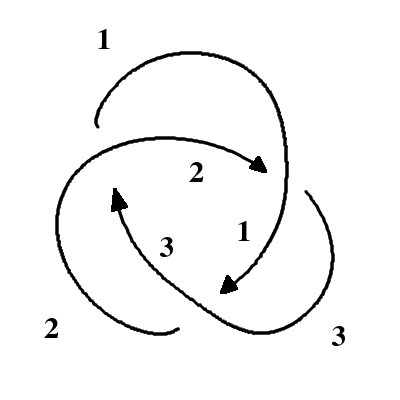}\\
\hline
\includegraphics[width=1in]{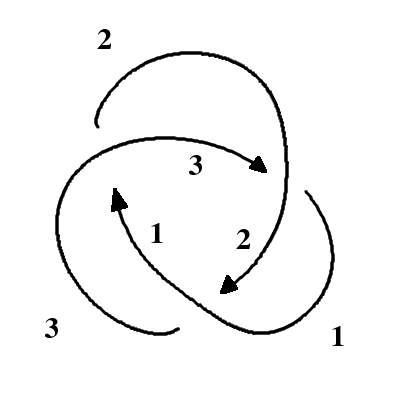}&
\includegraphics[width=1in]{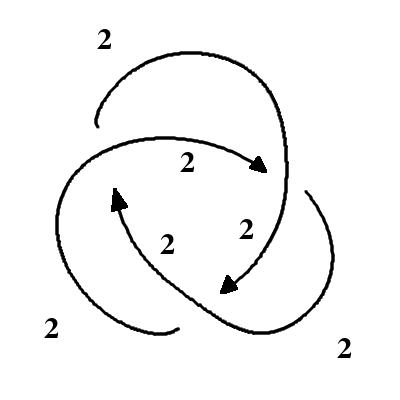}&
\includegraphics[width=1in]{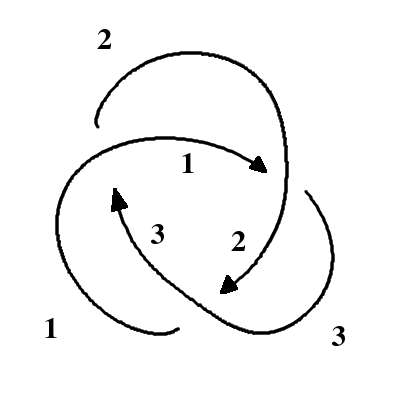}\\
\hline
\includegraphics[width=1in]{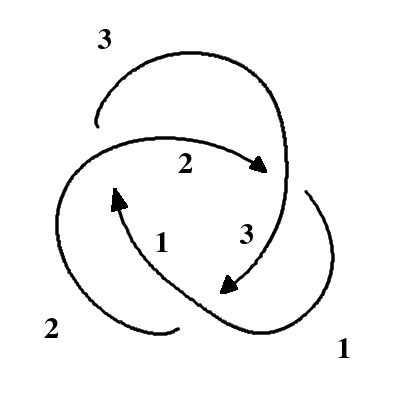}&
\includegraphics[width=1in]{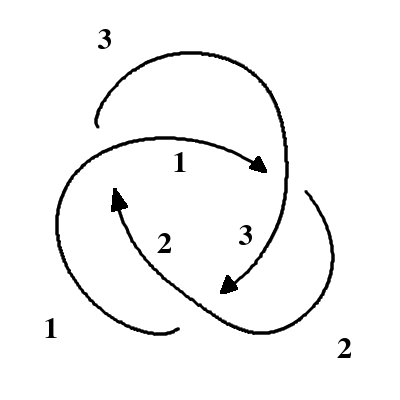}&
\includegraphics[width=1in]{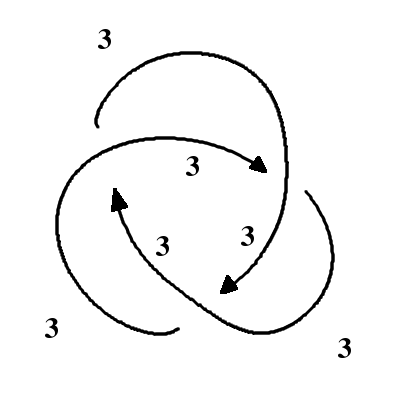}\\
\hline
\end{tabular}}
\end{center}
\end{example}

\begin{example}
\textup{Let us compute the column group enhanced birack counting invariant 
of the Hopf link with respect to the birack $X$ with birack matrix}
\[M_{(X,B)}=\left[\begin{array}{cccc|cccc} 
 2 & 2 & 2 & 2 & 1 & 1 & 1 & 1 \\
 1 & 1 & 1 & 1 & 2 & 2 & 2 & 2 \\
 3 & 3 & 3 & 3 & 3 & 3 & 4 & 4 \\
 4 & 4 & 4 & 4 & 4 & 4 & 3 & 3\\
\end{array}\right].\]
\textup{The birack rank of $(X,B)$ is 2, so we need to consider diagrams of the
Hopf link with both even and odd writhes on each component. The labeling rule
can be expressed as follows: semiarcs labeled $1$ switch to $2$ and $2$ 
switch to $1$ when crossing under any arc, semiarcs labeled $3$ or $4$ 
retain their label when crossing under any arc; semiarcs labeled $3$ switch 
to $4$ and $4$ switch to $3$ when crossing over a $3$ or $4$, and all other 
overcrossings retain their labels when crossing over. Note that the labelings 
of the diagrams
with writhe vectors $(0,1)$ and $(1,0)$ are the same due to the symmetry of 
the link. The reader can easily verify that the valid labelings are the ones
listed in the table.}
\[\begin{array}{c}
\begin{array}{|cc|cc|} \hline
\raisebox{-0.5in}{\includegraphics{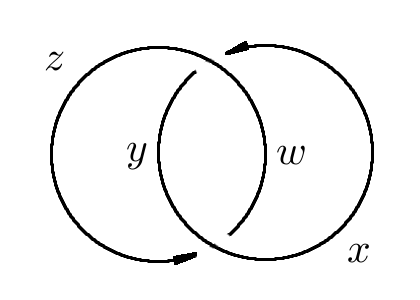}} & 
\begin{array}{cccc}
x & y  & z & w \\ \hline
- & -  & - & - \\
\end{array}
&
\raisebox{-0.5in}{\includegraphics{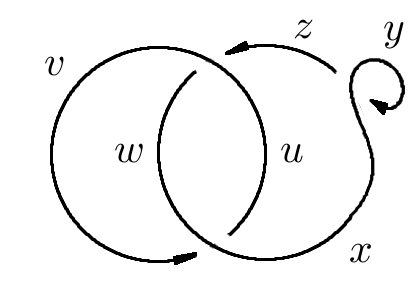}} &
\begin{array}{cccccc}
x & y & z & w & u & v \\ \hline
1 & 2 & 2 & 1 & 3 & 3 \\
1 & 2 & 2 & 1 & 4 & 4 \\
2 & 1 & 1 & 2 & 3 & 3 \\
2 & 1 & 1 & 2 & 4 & 4 \\
\end{array} \\ \hline \end{array} \\
\begin{array}{|ccc|} \hline
\raisebox{-0.5in}{\includegraphics{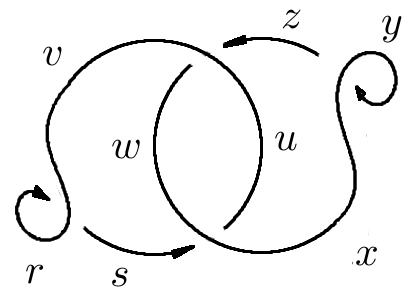}} & 
\begin{array}{cccccccc}
x & y & z & w & u & v & s & t \\ \hline
1 & 2 & 2 & 1 & 1 & 2 & 2 & 1 \\
1 & 2 & 2 & 1 & 2 & 1 & 1 & 2 \\
2 & 1 & 1 & 2 & 1 & 2 & 2 & 1 \\
2 & 1 & 1 & 2 & 2 & 1 & 1 & 2 \\
\end{array} &
\begin{array}{cccccccc}
x & y & z & w & u & v & s & t \\ \hline
3 & 3 & 4 & 4 & 3 & 3 & 4 & 4 \\
3 & 3 & 4 & 4 & 4 & 4 & 3 & 3 \\
4 & 4 & 3 & 3 & 3 & 3 & 4 & 4 \\
4 & 4 & 3 & 3 & 4 & 4 & 3 & 3 \\
\end{array} \\ \hline
\end{array}
\end{array}\]

\textup{
The integral birack counting invariant is thus $\Phi^{\mathbb{Z}}_{(X,B)}(L)=16$. The column group enhancement information distinguishes some of the labelings --
the image subbiracks of labelings include $\{1,2\}$, $\{3,4\}$ and $\{1,2,3,4\}$
with corresponding column groups $\mathbb{Z}_2$, $\mathbb{Z}_2$ and 
$\mathbb{Z}_2\oplus\mathbb{Z}_2$
respectively. Thus, the column group enhanced rack counting invariant is}
\[\phi^{CG}_{(X,B)}(L)=4u^2 + 12u^4\]
\textup{or in multiset form}
\[\phi^{CG,M}_{(X,B)}(L)=\{4\times \mathbb{Z}_2, 
12\times \mathbb{Z}_2\oplus\mathbb{Z}_2\}.\]

\end{example}

As we have seen, the integral counting invariant can be obtained as a 
specialization of the column group enhanced invariant.
Our last two examples show that the column group enhanced counting invariants
are strictly stronger than the unenhanced counting invariants.

\begin{example}
\textup{Consider the knots $5_1$ and $6_1$. Several 
other enhancements of counting invariants detect the difference between
these two knots despite having the same integral counting invariant value,
including generalized quandle polynomial enhancements and rack shadow
enhancements \cite{N2, CN2}. As expected, there is a blackboard birack 
$(X,B)$ whose column group enhancement distinguishes the knots $5_1$ and
$6_1$ while we have 
$\Phi^{\mathbb{Z}}_{(X,B)}(5_1)=30=\Phi^{\mathbb{Z}}_{(X,B)}(6_1)$.}
\[\scalebox{0.9}{$
M_{(X,B)}=
\left[\begin{array}{cccccccccc|cccccccccc}
1 & 3 & 5 & 2 & 4 & 2 & 1 & 5 & 4 & 3 & 
1 & 1 & 1 & 1 & 1 & 1 & 1 & 1 & 1 & 1 \\
5 & 2 & 4 & 1 & 3 & 4 & 3 & 2 & 1 & 5 & 
2 & 2 & 2 & 2 & 2 & 2 & 2 & 2 & 2 & 2 \\
4 & 1 & 3 & 5 & 2 & 1 & 5 & 4 & 3 & 2 &
3 & 3 & 3 & 3 & 3 & 3 & 3 & 3 & 3 & 3 \\
3 & 5 & 2 & 4 & 1 & 3 & 2 & 1 & 5 & 4 &
4 & 4 & 4 & 4 & 4 & 4 & 4 & 4 & 4 & 4 \\
2 & 4 & 1 & 3 & 5 & 5 & 4 & 3 & 2 & 1 &
5 & 5 & 5 & 5 & 5 & 5 & 5 & 5 & 5 & 5 \\
8 & 10 & 7 & 9 & 6 & 6 & 10 & 9 & 8 & 7 &
6 & 6 & 6 & 6 & 6 & 6 & 6 & 6 & 6 & 6 \\
7 & 9 & 6 & 8 & 10 & 8 & 7 & 6 & 10 & 9 &
7 & 7 & 7 & 7 & 7 & 7 & 7 & 7 & 7 & 7 \\
6 & 8 & 10 & 7 & 9 & 10 & 9 & 8 & 7 & 6 &
8 & 8 & 8 & 8 & 8 & 8 & 8 & 8 & 8 & 8 \\
10 & 7 & 9 & 6 & 8 & 7 & 6 & 10 & 9 & 8 &
9 & 9 & 9 & 9 & 9 & 9 & 9 & 9 & 9 & 9 \\
9 & 6 & 8 & 10 & 7 & 9 & 8 & 7 & 6 & 10 &
10 & 10 & 10 & 10 & 10 & 10 & 10 & 10 & 10 & 10 \\
\end{array}\right]$}
\]
\[
\begin{array}{cc}
\includegraphics{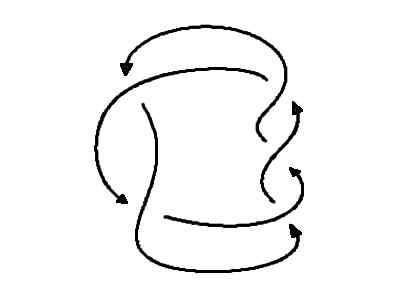} & \includegraphics{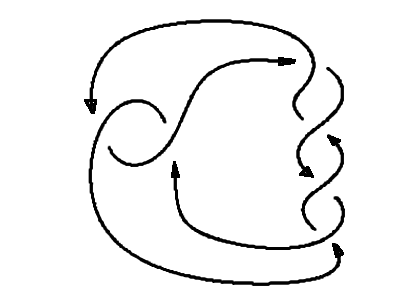} \\
\Phi^{CG}_{(X,B)}=5u^2+5u^4 + 20u^{10} & 
\Phi^{CG}_{(X,B)}=5u^2+5u^4 + 20u^{20} \\
\end{array}
\]
\end{example}

\begin{example}\label{ex3141}
\textup{Let  $(X,B)$ be the $27$-element conjugation quandle on the conjugation 
classes of $(13)(56)$ and $(15643)$ in $S_6$;, i.e. 
\[X=\{x\in S_6\ |\ x=y^{-1}(13)(56)y \ \mathrm{or}\ x=y^{-1}(15643)y\ 
\mathrm{for \ some}\ y\in S_6\}\] 
with birack operation \[B(x,y)=(x^{-1}yx,x).\]
Our \texttt{python} computations say that both the trefoil
$3_1$ and the figure eight $4_1$ have quandle counting invariant value
$|\mathrm{Hom}(3_1,T)|=|\mathrm{Hom}(4_1,T)|=147$; however, the column
group enhancement reveals distinct values. We list only the operation matrix 
for $B_1$ since $M_{B_2}$ is the trivial operation, i.e $B_2(x,y)=x$.}
\[
\begin{array}{rcl}
\phi^{CG}_{(X,B)}(3_1) & = & 12u^5+15u^2+60u^6+60u^{60} \\
\phi^{CG}_{(X,B)}(4_1) & = & 12u^5+15u^2 + 120u^{60}.
\end{array}
\]
\scalebox{0.7}{
$M_T=\left[\begin{array}{ccccccccccccccccccccccccccc}
1& 3& 4& 5& 6& 3& 5& 18& 23& 24& 16& 19& 4& 22& 23& 24& 22& 19& 18& 1& 1& 16& 15& 17& 17& 15& 6\\ 
7& 2& 8& 9& 10& 11& 12& 7& 7& 13& 2& 13& 14& 8& 14& 12& 27& 11& 13& 25& 11& 11& 11& 26& 8& 14& 12\\ 
5& 15& 3& 6& 4& 1& 16& 17& 18& 19& 1& 20& 6& 21& 20& 21& 18& 15& 22& 17& 16& 19& 3& 3& 22& 4& 5\\ 
6& 20& 1& 4& 3& 5& 19& 3& 22& 15& 17& 23& 5& 6& 19& 4& 23& 4& 21& 22& 24& 20& 17& 15& 24& 21& 1\\ 
3& 17& 6& 1& 5& 4& 23& 20& 6& 21& 24& 16& 3& 18& 5& 19& 21& 24& 23& 16& 17& 5& 20& 18& 1& 19& 4\\ 
4& 18& 5& 3& 1& 6& 20& 15& 21& 4& 22& 5& 1& 23& 16& 15& 6& 21& 6& 24& 22& 23& 18& 20& 16& 24& 3\\
2& 8& 14& 25& 12& 26& 7& 27& 10& 12& 12& 2& 8& 27& 25& 13& 8& 27& 25& 11& 25& 9& 10& 25& 7& 2& 10\\ 
12& 14& 2& 14& 26& 25& 2& 8& 27& 2& 7& 14& 9& 9& 13& 26& 7& 10& 26& 26& 9& 27& 26& 11& 27& 8& 7\\ 
26& 25& 12& 2& 25& 14& 14& 14& 9& 8& 27& 9& 11& 25& 12& 11& 12& 12& 10& 12& 8& 7& 13& 27& 11& 27& 8\\ 
14& 27& 25& 26& 2& 12& 27& 11& 26& 10& 26& 7& 7& 10& 11& 14& 14& 8& 9& 27& 13& 14& 7& 14& 12& 12& 11\\ 
25& 11& 26& 12& 14& 2& 9& 25& 25& 27& 11& 27& 10& 26& 10& 9& 13& 2& 27& 7& 2& 2& 2& 8& 26& 10& 9\\ 
8& 7& 9& 11& 7& 10& 10& 10& 12& 26& 13& 12& 2& 7& 9& 2& 9& 9& 14& 9& 26& 25& 27& 13& 2& 13& 26\\ 
27& 12& 27& 27& 27& 27& 26& 12& 2& 25& 14& 26& 13& 2& 8& 7& 11& 25& 2& 14& 10& 26& 9& 12& 14& 25& 13\\ 
10& 13& 7& 8& 11& 9& 13& 2& 8& 14& 8& 25& 25& 14& 2& 10& 10& 26& 12& 13& 27& 10& 25& 10& 9& 9& 2\\
23& 19& 17& 21& 15& 16& 24& 1& 17& 20& 3& 18& 23& 4& 15& 6& 20& 3& 24& 18& 19& 15& 1& 4& 21& 6& 16\\ 
17& 1& 21& 16& 23& 15& 6& 23& 5& 22& 19& 24& 15& 17& 6& 16& 24& 16& 20& 5& 3& 1& 19& 22& 3& 20& 21\\ 
16& 4& 15& 23& 21& 17& 1& 22& 20& 16& 5& 15& 21& 24& 18& 22& 17& 20& 17& 3& 5& 24& 4& 1& 18& 3& 23\\ 
19& 21& 20& 18& 24& 22& 17& 24& 15& 5& 6& 3& 22& 19& 17& 18& 3& 18& 1& 15& 23& 21& 6& 5& 23& 1& 20\\ 
18& 16& 22& 24& 20& 19& 21& 5& 1& 18& 15& 22& 20& 3& 4& 5& 19& 1& 19& 23& 15& 3& 16& 21& 4& 23& 24\\ 
20& 24& 18& 22& 19& 24& 22& 16& 3& 23& 4& 17& 18& 15& 3& 23& 15& 17& 16& 20& 20& 4& 5& 6& 6& 5& 19\\ 
21& 23& 16& 15& 17& 23& 15& 4& 24& 3& 18& 6& 16& 5& 24& 3& 5& 6& 4& 21& 21& 18& 22& 19& 19& 22& 17\\ 
24& 6& 19& 20& 22& 18& 3& 21& 19& 1& 23& 4& 24& 16& 22& 17& 1& 23& 3& 4& 6& 22& 21& 16& 20& 17& 18\\ 
15& 22& 23& 17& 16& 21& 18& 19& 4& 6& 21& 1& 17& 20& 1& 20& 4& 22& 5& 19& 18& 6& 23& 23& 5& 16& 15\\ 
22& 5& 24& 19& 18& 20& 4& 6& 16& 17& 20& 21& 19& 1& 21& 1& 16& 5& 15& 6& 4& 17& 24& 24& 15& 18& 22 \\ 
11& 26& 10& 7& 9& 8& 25& 13& 14& 9& 9& 11& 26& 13& 7& 27& 26& 13& 7& 2& 7& 12& 14& 7& 25& 11& 14 \\ 
9& 10& 11& 10& 8& 7& 11& 26& 13& 11& 25& 10& 12& 12& 27& 8& 25& 14& 8& 8& 12& 13& 8& 2& 13& 26& 25 \\ 
13& 9& 13& 13& 13& 13& 8& 9& 11& 7& 10& 8& 27& 11& 26& 25& 2& 7& 11& 10& 14& 8& 12& 9& 10& 7& 27 \\
\end{array}
\right]$}
\end{example}

\section{\large \textbf{Questions}}\label{Q}

In this section we collect questions for future research.

What kinds of groups can arise as column groups of a finite rack or quandle?
That is, given a finite group, can one construct a blackboard birack
with the specified column group? What is the relationship between the column 
group and birack polynomials?
 
A constant action rack always has a cyclic column group, generated by the
single column permutation appearing in the birack matrix; what 
can one say about the column groups of specific types of blackboard biracks
such as conjugation quandles, symplectic quandles, Coxeter racks, or
$(t,s,r)$-biracks?

In \cite{J}, a construction is given which expresses any quandle in terms
of a quandle structure on right cosets of the automorphism group of the
original quandle; is a similar construction possible starting with the
column group? What is the correct generalization of the column group to
infinite biracks?

To maximize sensitivity of the column group enhanced invariant,
we want blackboard biracks with as many subbiracks with distinct 
column subgroups as possible. On the other hand, biracks $(X,B)$ with 
larger cardinalities require more computation time. Finding fast 
algorithms for computing the sets of birack labelings of a diagram
will improve the practical utility of column group enhanced invariants.

Our \texttt{python}
code for computing the invariants defined in this paper is available 
at the second listed author's website, \texttt{www.esotericka.org}. 
Portions of this paper also appear in the first listed author's senior
thesis.

\bigskip

\noindent
\textsc{Department of Mathematics \\
University of California, San Diego\\
9500 Gilman Dr. \#0112\\
La Jolla, CA 92093-0112}

\noindent \textit{Email address:} \texttt{jhennig@math.ucsd.edu }

\bigskip

\noindent
\textsc{Department of Mathematical Sciences \\
Claremont McKenna College \\
850 Colubmia Ave. \\
Claremont, CA 91711}

\noindent
\textit{Email address:} \texttt{knots@esotericka.org}

\end{document}